\newtheorem{thm}{Theorem}[section]
\title{Nonlinear waves and polarization in diffusive directed particle flow}
\author{Heinrich Freist\"uhler \and Jan Fuhrmann}\thanks{Universit\"at Konstanz, Fachbereich Mathematik und Statistik, Universit\"atsstra\ss e 10, 78464 Konstanz, Germany\\Johannes-Gutenberg-Universit\"at Mainz, Institut f\"ur Mathematik, Staudingerweg 9, 55128 Mainz, Germany, fuhrmann@uni-mainz.de}
\begin{document}

\begin{abstract}
We consider a system of two reaction-diffusion-advection equations describing the one 
dimensional directed motion of particles with superimposed diffusion and mutual alignment. 
For this system we show the existence of traveling wave solutions for weak diffusion by 
singular perturbation techniques and provide evidence for their existence also for stronger 
diffusion. We discuss different types of wave fronts and their composition to more complex 
patterns and illustrate their emergence from generic initial data by simulations. 
We also investigate the dependence of the wave velocities on the model parameters.
\end{abstract}

\maketitle

\section{Introduction} \label{sec:Intro}

Traveling waves in reaction-diffusion systems have been investigated for a long time, and applications to 
biological contexts are abundant in the literature (cf. \cite{TysKee1988}, \cite{VolPet2009}, and references therein). 
However, our principal motivating biological system, the cytoskeleton of motile cells, is not 
adequately described by pure reaction-diffusion 
equations since the advective effects of actin flow play a crucial role. A comprehensive reaction-diffusion-advection model for the actin cytoskeleton of a motile cell 
was established in \cite{FuhKasSte2007} and analyzed in \cite{FuhSte2015}.
In the present paper, we investigate a simpler, prototypical reduced system that 
isolates features which seem essential for hyperbolic-parabolic systems with reaction such as the cell motion model of \cite{FuhKasSte2007},
notably regarding the formation, propagation, and interaction of waves.

To obtain the `reduced model' and enable both analytical access and more reliable 
simulation, we have on the one hand chosen a comparatively low overall complexity and on the other hand 
explicitly introduced  features that, from the perspective of the cell motion problem, 
correspond to the (reasonable) assumptions that the filaments are short and capable 
of aligning each other.
The resulting system resembles previous models for directed diffusive 
particle flow (e.g.,\cite{LutSte2002} without diffusion, \cite{Man2010}, more recently \cite{SanPer2016}, \cite{ZhaJin2017}), and has a cross advection structure making it, in our opinion, worthy of attention in 
its own right. We shall see that it exhibits a surprisingly rich dynamic behavior, which renders it
interesting also from an intrinsically mathematical point of view.

We employ geometric singular perturbation theory (established in \cite{Fen1979}, \cite{Szm1991}, 
summarized in \cite{Kue2015}) to find traveling wave solutions to this model. In section \ref{sec:polarization}, the traveling wave problem is formulated and for slow diffusion 
of filaments is observed to be a singular perturbation of the purely hyperbolic limit problem. 
The existence of `polarization waves' for small diffusion is then deduced from the existence of 
traveling fronts in the hyperbolic limit using geometric singular perturbation theory. Possible velocities for monotone polarization waves are discussed by linearization about the asymptotic states. 
We also find `inversion waves'. These do not exist in the hyperbolic limit system, but are 
a distinguishing feature of the full problem with diffusion. Their emergence and properties 
are discussed in section \ref{sec:inversion}. In section \ref{sec:simulations} we present 
wave patterns emerging in simulations of the full PDE system. These are composed of several 
simple waves discussed in sections \ref{sec:polarization} and \ref{sec:inversion}, and we 
investigate the dependence of the wave speeds on the model parameters. Finally, we summarize 
the results and relate them to experimentally observed actin waves in \ref{sec:conclusion}. 
We also comment on the interpretation of the traveling wave solutions in connection with
shock-like waves found in \cite{FuhSte2015}.

\section{Formulation of the model}\label{sec:formulation}

With the goal of understanding the motion of actin filaments in a  model for the cytoskeleton 
proposed in \cite{FuhKasSte2007}, we consider a collection of particles moving in one space dimension with fixed velocity $v$ either to the left or to the right. Denoting their densities by $u_{r}$ and $u_{l}$, for right and left moving filaments, respectively, this reads 
\begin{subequations} 
\begin{eqnarray}\label{eq:filaments_pure}
\partial_{t} u_{r}(t,x) + v\partial_{x} u_{r}(t,x) &=& 0\\
\partial_{t} u_{l}(t,x) - v\partial_{x} u_{l}(t,x) &=& 0.
\end{eqnarray}
\end{subequations}
These equations are strongly simplified versions of the equations describing the densities of actin filaments as established in \cite{FuhKasSte2007}. This cytoskeleton model comprised of hyperbolic and parabolic equations was observed to exhibit shock-like solutions in \cite{FuhSte2015}. As these came as a surprise we decoupled the hyperbolic equations for the motion of filaments from the polymerization dynamics and thus arrived at \eqref{eq:filaments_pure}. 

Rewriting this system in terms of total particle density $u := u_{r}+u_{l}$ and the difference $w := u_{r}-u_{l}$ of right and left oriented particles -- we will call $w$ the polarization -- we can rewrite this into
\begin{eqnarray*}
 \partial_{t}u+\partial_{x}w = 0 &~~~~~~~& \partial_{t}w+\partial_{x}u = 0 
\end{eqnarray*}
where we have chosen the velocity to be $1$ by implicitly rescaling space and time.

As one of the assumptions used on the way from the original cytoskeleton model to \eqref{eq:filaments_pure} was the shortness of the filaments it makes sense to assume them to undergo slow diffusion and being capable of aligning one another:
\begin{subequations}\label{eq:full}
\begin{eqnarray} 
 \partial_{t}u+\partial_{x}w &=& \varepsilon \partial_{xx}u \\
 \partial_{t}w+\partial_{x}u &=& \varepsilon \partial_{xx}w + f(u,w) 
\end{eqnarray}
\end{subequations}
where the slowness of the diffusion is reflected by assuming the diffusion coefficient $\varepsilon$ to be small. The alignment term $f$ accounts for the ability of filaments to turn around those which come from the opposite direction. We shall assume that the majority will be able to turn around the minority more effectively than vice versa and thus $f(u,w) \geq 0$ if $0\leq w\leq u$ and $f(u,w) \leq 0$ if $0 \geq w \geq -u$. It should be noted that the only nonlinear term in \eqref{eq:full} is the alignment term while both diffusion and cross advection are purely linear.

Given a fixed total particle density, the dependence of the alignment term on the polarization will take the form of the force derived from a bistable potential with stable equilibria at $w = \pm u$ and an unstable equilibrium at $w = 0$ as exemplarily given by
\begin{equation}
 f(u,w) ~=~ \alpha f_{0}(u,w) ~:=~ \alpha w \left( 1 - \frac{w^{2}}{u^{2}} \right) \exp\left[-\beta^{2} u^{2} \right]   \label{eq:fsub}
\end{equation}
where the exponential term accounts for a crowding effect which makes the alignment increasingly more difficult when the density becomes too large. A more detailed derivation of system \eqref{eq:full} and possible other alignment terms $f$ is found in \cite{FreFuhSte2014}.

The model contains the two parameters $\varepsilon$ and $\alpha$ denoting the strengths of diffusion and alignment respectively. By rescaling both time and space by $\alpha$,
$$\tilde t = \alpha t,~~~~~~~~~~~~\tilde x = \alpha x$$
we arrive at the rescaled problem
\begin{subequations} \label{eq:rescaled_full}
\begin{eqnarray}
 \partial_{\tilde t} \tilde u + \partial_{\tilde x} \tilde w &=& \alpha\varepsilon\partial_{\tilde x\tilde x} \tilde u \\
 \partial_{\tilde t} \tilde w + \partial_{\tilde x} \tilde u &=& \alpha\varepsilon\partial_{\tilde x\tilde x} \tilde w + f_{0}(\tilde u,\tilde w)
\end{eqnarray}
\end{subequations}
depending only on the single parameter $a:=\alpha\varepsilon$ as rescaled diffusion coefficient. In what follows we shall work with this rescaled reaction-diffusion-advection system and omit the tilde over the variables.

\section{Polarization waves} \label{sec:polarization}

\subsection{Existence of polarization waves}

Looking for traveling wave solutions to  problem \eqref{eq:rescaled_full} we describe putative wave profiles at wave speed $c$ by 
$$U(\xi) = u(t,x)~~ \mbox{ and }~~W(\xi) = w(t,x)~~~~~~\mbox{ with }\xi=x-ct$$ 
and obtain the following system of ordinary differential equations
\begin{subequations}   \label{eq:TWfull}
 \begin{align}
 -c\, U'(\xi) + W'(\xi) &~= a\, U''(\xi)\\   
 -c\, W'(\xi) + U'(\xi) &~= a\, W''(\xi) + f_{0}(U,W)
  \end{align}
\end{subequations}
as traveling wave problem.

As a toy model let us start with the purely hyperbolic system without diffusion (that is, $a=0$), which via $W' = c U'$  can be written as
\begin{equation} \label{eq:TW_hyp}
 U' = \frac{1}{1-c^2} f_0(U,W), ~~~~~~~~W' =  \frac{c}{1-c^2} f_0(U,W).
\end{equation}
That immediately shows that the wave speed $c$ should be different from the intrinsic velocity $\pm 1$ of the particles. As the set of equilibria consists of three rays in the $W$-$U$-plane and all trajectories are straight lines with slope $c^{-1}$ we can easily sketch possible orbits corresponding to the wave fronts (cf. Figure \ref{fig:Phasespace_hyp}) and also calculate the relative values of the asymptotic states. 

\begin{SCfigure}[2][hbt]
 \includegraphics[width=50mm, keepaspectratio]{./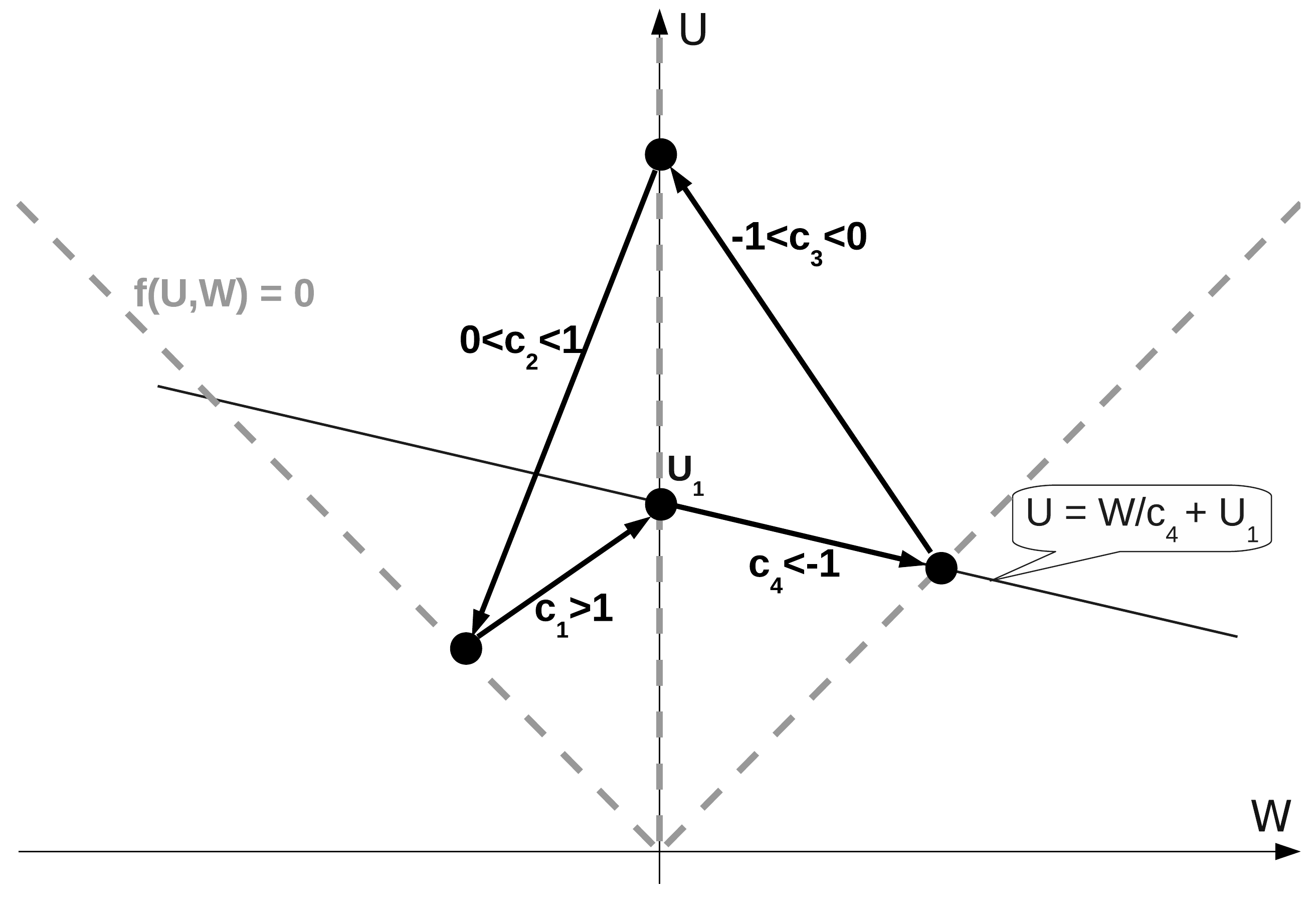}
\caption{Sketch of the equilibria (\textit{gray} lines) for the hyperbolic limit system ($a=0$) and possible heteroclinic orbits connecting them in the upper $W$-$U$ half plane. Note that going through all of the sketched orbits we obtain a pattern of waves with the same asymptotic state as $\xi\!\to\! -\infty$ and $\xi\!\to\!\infty$, respectively.}
\label{fig:Phasespace_hyp}
\end{SCfigure}

More precisely, whenever the wave velocity is non-zero and different from the particle velocity, $c\neq \pm 1$, we conclude the existence of orbits connecting the central equilibrium $W=0$ with any of the outer equilibria $W=\pm U$ . If $\vert c \vert$ approaches one, the equation tells us that apart from the equilibria the rate of change $W'$ of the polarization becomes very large so that we can only expect shocks, or rather contact discontinuities (see, e.g., \cite{Smo1994}), to move with speed $\pm 1$. 

Introducing now a small diffusion coefficient amounts to a singular perturbation to this simple problem which we shall check to be the normally hyperbolic limit of the full problem as $a=\alpha\varepsilon$ decreases to zero. To see this we observe that for the full problem we can write the traveling wave equations as first order system
\begin{subequations} \label{eq:TWfull_system}
 \begin{eqnarray}
  U'(\xi) &~:=~& Z(\xi) \\
  W'(\xi) &~:=~& V(\xi) \\
  a\,Z'(\xi) &~=~& V(\xi) - c\, Z(\xi)\\
  a\,V'(\xi) &~=~& Z(\xi) - c\,V(\xi) - f_{0}(U(\xi),\, W(\xi)).
 \end{eqnarray}
\end{subequations}
which is defined in the half space $H = \{U>0\}\subset \mathbb{R}^4$ and for all $a\in\mathbb{R}$ though we only consider $a\geq0$. The physically meaningful region is the invariant domain
\begin{equation*}
 \mathcal{M} = \left\{ (U,W,Z,V)\in \mathbb{R}^{4} ~\mid~ U>0,\,\vert W\vert < U\right\}\subset H.
\end{equation*}

We should emphasize that in what follows we view \eqref{eq:TWfull_system} as autonomous dynamical system with the traveling wave variable $\xi$ playing the role of time (not to be confused with physical time). This allows us to use the language from \cite{Kue2015} dealing with singular perturbation problems in terms of multiple time scales. The terms \emph{fast}, \emph{slow} and others we will use are thus borrowed from the theory of dynamical systems and refer to $\xi$ as ``time''.

As reduced system (or slow subsystem) for the fast-slow system \eqref{eq:TWfull_system} we recover the traveling wave equation \eqref{eq:TW_hyp} for the hyperbolic system (with $a=0$) acting on the critical manifold
\begin{equation*}
 \mathcal{S} = \left\{ (U,W,Z,V)\in H ~\mid~  Z = \frac{1}{1-c^{2}} f_{0}(U,W), \, V=\frac{c}{1-c^{2}}f_{0}(U,W) \equiv c Z  \right\}.
\end{equation*}
Note that $\mathcal{S}$ is parametrized as graph of the function
$$h:\{(U,W)\subset \mathbb{R}^2 \mid U>0\} \to \mathbb{R}^2,~~~ (U,W) \mapsto \frac{f_0(U,W)}{1-c^2} (1,c)$$
and thus equation \eqref{eq:TW_hyp} makes sense on $\mathcal{S}$ by applying it to the arguments of $h$.

Rewriting \eqref{eq:TWfull_system} in the new fast time variable $\tau = a^{-1}\xi$ yields the fast system
\begin{equation}
 \begin{array}{rclrr}
  U'(\tau) &~=~& a\, Z(\tau) &~~& =: G_1(U,W,Z,V)\\
  W'(\tau) &~=~& a\, V(\tau) && =: G_2(U,W,Z,V)\\
  Z'(\tau) &~=~& V(\tau) - c\, Z(\tau) && =: F_1(U,W,Z,V) \\
  V'(\tau) &~=~& Z(\tau) - c\,V(\tau) - f_{0}(U(\tau),\, W(\tau)) && =: F_2(U,W,Z,V)
 \end{array}
\end{equation}
 which is reduced to a two dimensional, linear, constant coefficient layer problem (fast subsystem) upon setting $a=0$ and prescribing any constant $U=\bar{U}>0$ and $W = \bar{W}$. The eigenvalues of this subsystem's coefficient matrix 
 \begin{equation}
 \frac{\partial (F_1,F_2)}{\partial(Z,V)} =  \begin{pmatrix}
   -c & 1 \\ 1 & -c
  \end{pmatrix}
 ~~~~~~\mbox{ are }~~~~~~ -c\pm 1,
 \end{equation}
which means that as $\tau\to\infty$, the solution to the layer problem approaches the unique equilibrium
\begin{equation} 
Z=Z_\infty=\frac{\bar f}{1-c^2}, V = V_\infty = \frac{c \bar f}{1-c^2}
\end{equation}
whenever $c>1$. These values are $0$ if and only if $\bar f := f_0(\bar U,\bar W)=0$, which reminds us of the fact that the zeros of $f_0$ are equilibria of the full system \eqref{eq:TWfull}. Moreover, both eigenvalues are real and different from $0$ whenever $c\neq \pm1$, and since they are independent of $(\bar W, \bar U)$, we conclude that the invariant manifold $S$ is normally hyperbolic. More precisely, it is attracting if $c>1$, repelling if $c<-1$, and of saddle type if $\vert c\vert<1$. 

 
 \begin{figure}[hbt]
  \includegraphics[width=\linewidth]{./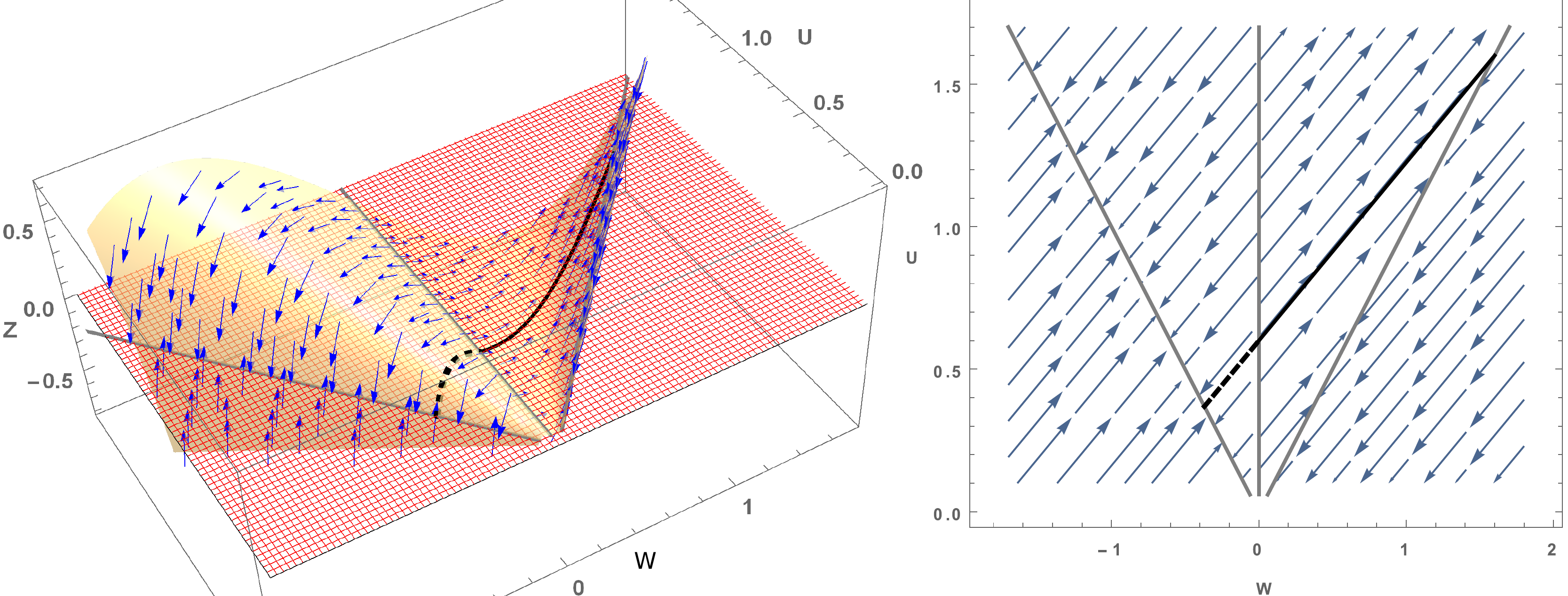}
  \caption{\emph{Right:} Slow flow on the critical manifold in $W$-$U$-$Z$-space (the $V$ direction being suppressed) for $c=1.8$. Two sample heteroclinic orbits from the outer equilibria $W=\pm U$ to the central equilibrium $W=0$ (at $U_1 = 0.6$) and the $Z=0$-plane are shown as well. \emph{Left:} The same flow and trajectories projected to the $W$-$U$-plane (compare Figure \ref{fig:Phasespace_hyp}).}
  \label{fig:slowflow}
 \end{figure}

From the projection of the slow flow on the $W$-$U$-plane it is obvious that given $c\notin\{0,\pm1\}$, there are one-parameter families of heteroclinic orbits for the slow flow which foliate the critical manifold $\mathcal{S}$. As Figure \ref{fig:slowflow} makes apparent, these orbits lie in the intersection of the critical manifold with one of the invariant hyperplanes $aZ+cU-W = C_1$, and the condition $c\notin\{0,\pm1\}$ ensures that the lines of equilibria are transversal to these hyperplanes. Rather than by $C_1$, the families of orbits are best parametrized by the $U$-value $U_1$ of their end point $(U\!=\!U_1,W\!=\!Z\!=\!V\!=\!0)$ located on the $U$-axis, and each of these trajectories is given as the curve $\xi\mapsto(U(\xi),W(\xi),h(U(\xi),W(\xi)))$ with $\xi\mapsto(U(\xi),W(\xi))$ being one of the heteroclinic orbits for \eqref{eq:TW_hyp} (see Figure \ref{fig:Phasespace_hyp}).

The geometric singular perturbation theory developed by Fenichel in \cite{Fen1979}, refined by Szmolyan in \cite{Szm1991} (and exquisitely presented in \cite{Kue2015}) now ensures the existence of similar (families of) trajectories for the full model \eqref{eq:TWfull} if the diffusion coefficient is sufficiently small, where the necessary smallness may depend on the wave speed and the alignment term. We can therefore formulate the following theorem.

\begin{thm}
 Fix a wave speed $c$ different from $\pm 1$ and $0$ and a value $U_1 > 0$. Then, there exists $a_{0}(c)>0$ such that for any $a\in (0,\,a_{0}(c))$ the system (\ref{eq:rescaled_full}) admits traveling wave solutions with velocity $c$ connecting the steady states
\begin{itemize}
 \item[$(i)$]{$W=0$ to $W=U$ for $c<-1$}
 \item[$(ii)$]{$W=U$ to $W=0$ for $-1<c<0$}
 \item[$(iii)$]{$W=0$ to $W=-U$ for $0<c<1$ and}
 \item[$(iv)$]{$W=-U$ to $W=0$ for $c>1$}
\end{itemize}
where the steady state $W=0$ is to be understood as $(U,W) = (U_1,0)$.
\label{thm:TWexist}
\end{thm}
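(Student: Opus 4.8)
The plan is to obtain all four connections by lifting, through geometric singular perturbation theory, the heteroclinic orbits of the reduced flow \eqref{eq:TW_hyp} on $\mathcal{S}$ up to the full system \eqref{eq:TWfull_system} for small $a>0$. First I would pin down the orientation, which is the only place the sign of $c$ enters. Linearizing \eqref{eq:TW_hyp} transversally to each equilibrium ray (the tangential direction always carries a zero eigenvalue), the single nonzero eigenvalue equals $\tfrac{c}{1-c^2}\,e^{-\beta^2U^2}$ on the ray $W=0$ and $\tfrac{\pm 2}{1\pm c}\,e^{-\beta^2U^2}$ on the rays $W=\pm U$. Reading off the signs of these rational prefactors in the four ranges of $c$ shows that in each of the cases (i)--(iv) the prescribed starting state is a source and the terminal state a sink of the one-dimensional slow flow along the straight-line trajectory of slope $c$ through $(U_1,0)$ -- exactly the orbit drawn in Figures \ref{fig:Phasespace_hyp} and \ref{fig:slowflow}. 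Thus for every admissible $c$ the reduced problem possesses precisely the asserted heteroclinic orbit, lying in $\overline{\mathcal{M}}$ with endpoints $p_-,p_+$ on the equilibrium rays.

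Next I would invoke Fenichel's theorem \cite{Fen1979,Szm1991,Kue2015}: since $\mathcal{S}$ is normally hyperbolic (the fast eigenvalues $-c\pm 1$ are real and nonzero for $c\neq\pm 1$), there is $a_0(c)>0$ so that for $a\in(0,a_0(c))$ a locally invariant slow manifold $\mathcal{S}_a$ exists which is $C^r$-diffeomorphic and $O(a)$-close to $\mathcal{S}$, and the flow on it is a regular $O(a)$-perturbation of \eqref{eq:TW_hyp}. Because $p_\pm$ satisfy $f_0=0$ together with $Z=V=0$, they are equilibria of \eqref{eq:TWfull_system} for every $a$, they lie on $\mathcal{S}$, and hence on $\mathcal{S}_a$. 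Crucially, the entire slow heteroclinic lies on $\mathcal{S}$ with no fast segments, so no Exchange-Lemma tracking is required; it suffices to show that the connection survives as an orbit of the perturbed flow on $\mathcal{S}_a$.

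The hard part is the persistence of that connection, and the main obstacle is precisely that $p_\pm$ are \emph{not} hyperbolic as equilibria of the two-dimensional slow flow: each sits on an entire ray of equilibria and carries a zero eigenvalue along it, so the usual ``a saddle connection is robust'' argument does not apply. I would resolve this using the exact first integral $aZ+cU-W$: differentiating along \eqref{eq:TWfull_system} gives $(aZ+cU-W)'=(V-cZ)+cZ-V=0$, so the hyperplanes $\{aZ+cU-W=C_1\}$ are invariant for \emph{all} $a$, not merely at $a=0$. Taking $C_1=cU_1$, the desired orbit and both endpoints lie in one such hyperplane $\Pi$ (at $p_\pm$ the integral reduces to $cU-W$, independent of $a$). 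Since $c\notin\{0,\pm 1\}$, the three equilibrium rays meet $\Pi$ transversally in isolated points, so \emph{within} $\Pi$ the points $p_\pm$ are hyperbolic. Intersecting the two-dimensional $\mathcal{S}_a$ with the three-dimensional $\Pi$ produces a one-dimensional invariant curve $\gamma_a=\mathcal{S}_a\cap\Pi$ through $p_-$ and $p_+$, on which the flow is $C^1$-close to the one-dimensional slow flow that runs monotonically from the source $p_-$ to the sink $p_+$ with no intermediate equilibrium. A source-to-sink connection of a one-dimensional flow is robust, so $\gamma_a$ carries a heteroclinic orbit from $p_-$ to $p_+$; this orbit is the sought traveling wave.

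Finally I would check admissibility and asymptotics. By $C^0$-closeness of $\gamma_a$ to the slow orbit, which lies in $\overline{\mathcal{M}}$ with interior in $\{|W|<U\}$, the perturbed orbit stays in $\mathcal{M}$ for $a$ small, and its limits as $\xi\to\pm\infty$ are the prescribed steady states $W=0$ and $W=\pm U$, with $(U,W)=(U_1,0)$ at the central state. Re-running the sign bookkeeping of the first paragraph through cases (i)--(iv) fixes the orientation of the connection in each, which completes the argument. The decisive structural point is the exact invariance of the hyperplanes $\{aZ+cU-W=C_1\}$, which removes the degeneracy caused by the lines of equilibria; everything else is a routine application of Fenichel theory.
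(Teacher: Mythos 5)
Your proposal is correct and follows essentially the same route as the paper: identify the normally hyperbolic critical manifold $\mathcal{S}$ (fast eigenvalues $-c\pm1$), exhibit the heteroclinic orbits of the reduced flow \eqref{eq:TW_hyp}, and invoke Fenichel/Szmolyan to lift them to small $a>0$. The difference is one of emphasis. You spend your effort on the step the paper leaves implicit, namely why the slow heteroclinic actually persists even though its endpoints sit on whole rays of equilibria and are therefore non-hyperbolic for the two-dimensional slow flow; your resolution via the exact first integral $aZ+cU-W$ and the resulting source-to-sink connection of a one-dimensional flow on $\gamma_a=\mathcal{S}_a\cap\Pi$ is sound (the paper states the invariance of these hyperplanes and the transversality of the equilibrium rays to them in its preparatory discussion, but does not deploy them in the proof). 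Two small points: first, the inference ``they lie on $\mathcal{S}$, and hence on $\mathcal{S}_a$'' should be justified by noting that an equilibrium of the full system whose orbit stays near a normally hyperbolic slow manifold for all (forward or backward, as appropriate) time must lie on it, since $\mathcal{S}_a\neq\mathcal{S}$ in general. Second, you omit the one point the paper's proof does make explicit: $\mathcal{S}$ is not a smooth manifold near $U=0$ (the factor $W^2/U^2$ in $f_0$), and Fenichel's theorem is applied to a compact subset $\mathcal{C}\subset\mathcal{S}$ with $U\in[m,M]$ containing the relevant equilibria in its interior. This is easily supplied in your argument because the orbit segment and its endpoints stay in $U\geq\min\{U_1,\frac{c}{c\pm1}U_1\}>0$, but it should be said.
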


\begin{proof}
 The hyperbolicity of $\mathcal{S}$ and the heteroclinic orbits for the slow flow have already been discussed. The only point to be resolved is the fact that $\mathcal{S}$ is in fact not a smooth manifold due to the singularity at the origin. However, having fixed any value $U_1$ and a velocity $c$ we only require a compact subset 
 \begin{equation*}
  \mathcal{C} := \left\{(U,W,Z,V)\in \mathcal{S} \mid U\in[m,M], -U-\delta < W < U+\delta \right\}
 \end{equation*}
with appropriately chosen positive constants $m$, $M$, and $\delta$ such that the equilibria to be connected by the slow flow lie in the interior of $\mathcal{C}$. 

The heteroclinic orbits of system \eqref{eq:TWfull_system} obtained as perturbations of those for the slow flow on $\mathcal{S}$ then correspond to traveling wave solutions of \eqref{eq:rescaled_full}.
\end{proof}

We shall denote the traveling waves obtained in Theorem \ref{thm:TWexist} connecting a totally polarized state $W=\pm U$ and the non-polarized state $W=0$ as \emph{(de)polarization waves}. Fast polarization waves with $\vert c\vert >1$, traveling faster than  the individual particles, are characterized by a wave profile running through the medium and converting the symmetric, non-polarized state to a fully polarized state. To the contrary, the slow depolarization waves have a velocity smaller than the particle velocity and convert a fully polarized state into the symmetric state $W=0$. 

Returning to the fast subsystem
 \begin{align}
  Z' & = V- cZ,&~~&  V' = Z - cV - \bar f
 \end{align}
we recall that the eigenvalues of the coefficient matrix are $-c\pm 1$, meaning that for $\vert c\vert <1$ the solution becomes unbounded as $\tau$ becomes large (positive or negative) for generic initial conditions.

\subsection{Velocity of polarization waves}
\label{ss:linearization}

We shall now find admissible velocities for (de)polarization waves by assuming the wave profiles to be monotone. To this end, we will find necessary conditions for the existence of monotone wave fronts for finite $a>0$ by linearizing the system \eqref{eq:TWfull} of ordinary differential equations around its equilibria and checking the eigenvalues of the linearization for being real. 

Before writing the traveling wave equations as first order system we now integrate the equation for $U$ once to find the first integral $aU'+cU-W$ and restrict the system to the invariant hyperplane $aZ+cU-W = C_1$. Keeping this time the constant of integration $C_1$ and replacing $Z = \frac1a(C_1-cU+W)$ yields 
\begin{equation} \label{eq:TWfull_system_reduced}
 \begin{array}{rllcl}
  U' = & -\frac{c}{a} U & +\frac{1}{a} W && + \frac{C_1}{a}  \\
  W' = & & & V & \\
  V' = &-\frac{c}{a^{2}} U & + \frac{1}{a^{2}}W & - \frac{c}{a} V & + \frac{C_1}{a^{2}} - \frac{1}{a} f_{0}(U,W)
 \end{array}
\end{equation}
We should note that the intersection of these invariant hyperplanes with the slow manifold $\mathcal{S}_a$ are just the perturbations of the leafs of the foliation we alluded to in preparation to Theorem \ref{thm:TWexist}. We therefore find the same types of heteroclinic orbits in each of these hyperplanes, and we will presently parametrize them by the $U$-value $U_1 = \frac{1}{c}C_1$ at their intersection with the $U$-axis.

Possible asymptotic states to be connected, or equilibria, are the same as for system \eqref{eq:TWfull_system} and are given by the zeroes of $f$. They shall be denoted by $(U_{1}, W_{1}\!=\!0)$, $(U_{2}, W_{2}\! = \!-U_{2})$, and $(U_{3}, W_{3}\! = \!U_{3})$, where we skipped the trivial variables $V\!
=\!Z\!=\!0$ and only kept the physical ones, $U$ and $W$. The relative values of the $U_i$ belonging to one heteroclinic orbit are fixed by the requirement that any two connected equilibria must belong to the same invariant hyperplane. The physical meaning behind this fact is simply mass conservation. A traveling wave front moving to the right at speed $c>1$ and connecting, say, a fully right polarized state $U\!=\!W\!=\!U_3$ to the non-polarized state $(U\!=\!U_1, W\!=\!0)$ swallows particles at rate $\frac{c-1}{2}U_1$ right oriented particles and $\frac{c+1}{2}U_1$ left oriented ones, together $cU_1$. At its rear, it leaves $(c-1)U_3$ right oriented ones behind. This balance determines the relation $U_3=\frac{c}{c+1}U_1$ for this particular type of waves.

Having restricted the system to a three dimensional hyperplane, we can now ask for the type of the three equilibria which, $c$ being different from $0$ and $\pm1$, certainly have stable and unstable manifolds of total dimension three. The linearization of \eqref{eq:TWfull_system_reduced} at the equilibrium point $(U,W,V) = (U_i,W_i,0)$ reads
\begin{equation} \label{eq:TWfull_system_reduced_lin}
 \begin{pmatrix}
  (U-U_i)' \\ (W-W_i)' \\ V' 
 \end{pmatrix}
=\begin{pmatrix}
  -\frac{c}{a} & \frac1a & 0 \\ 0 & 0 & 1 \\ -\frac{c}{a^2} - \frac1a f_U & \frac{1}{a^2} - \frac1a f_W & -\frac{c}{a}
 \end{pmatrix}
  \begin{pmatrix}
  U-U_i \\ W-W_i \\ V 
 \end{pmatrix}
\end{equation}
with the abbreviations $f_U=\partial_U f_0\vert_{(U_i,W_i)}$ and  $f_W = \partial_W f_0\vert_{(U_i,W_i)}$. The eigenvalues are the solutions of the characteristic equation (matrix scaled by a factor of $a$)
\begin{equation}\label{eq:char_eqn}
 \lambda(\lambda+c)^2 + (a f_W-1)(\lambda + c) + c+af_U = 0.
\end{equation}
The partial derivatives are calculated to be
\begin{equation}
 f_U  = \begin{cases}
         0 &~\mbox{ at } W= 0 \\ \pm 2 e^{-\beta^2 U^2} &~\mbox{ at } W = \pm U 
        \end{cases}
        ~~~~\mbox{ and }~~~~f_W = 
        \begin{cases}
         e^{-\beta^2U^2} &~\mbox{ at } W= 0 \\ - 2 e^{-\beta^2 U^2} &~\mbox{ at } W = \pm U 
        \end{cases}
\end{equation}

The fully polarized states $W\!=\!\pm U$ turn out to be saddle points for arbitrary values of the wave speed $c\notin \{-1,0,1\}$ and the parameter $a>0$. More precisely, the eigenvalues (scaled by a factor of $a^{-1}$) turn out to  be 
$$\lambda_1 = 1-c,~~~~~~\lambda_{2/3} = -\frac12\left( 1+c \pm \sqrt{(1+c)^2+8a\,e^{-\beta^2 U_2^2}} \right)$$
at $W_3=U_3>0$, and
$$\lambda_1 = -(1+c),~~~~~~\lambda_{2/3} = \frac12\left( 1-c \pm \sqrt{(1-c)^2+8a\,e^{-\beta^2 U_3^2}} \right)$$
at $W_2=-U_2<0$. In both cases, $\lambda_{2/3}$ are of opposite sign, and the sign of $\lambda_1$ is determined only by $c$. In particular, the eigenvalues of the linearization are real at the outer equilibria and oscillations are not to be expected. A necessary condition for the existence of monotone wave fronts connecting any of the fully polarized states and the non-polarized one can thus only be obtained from the linearization at the latter where nonreal eigenvalues are possible. 

\begin{SCfigure}[40][hbt]
 \includegraphics[width=60mm,height=45mm]{./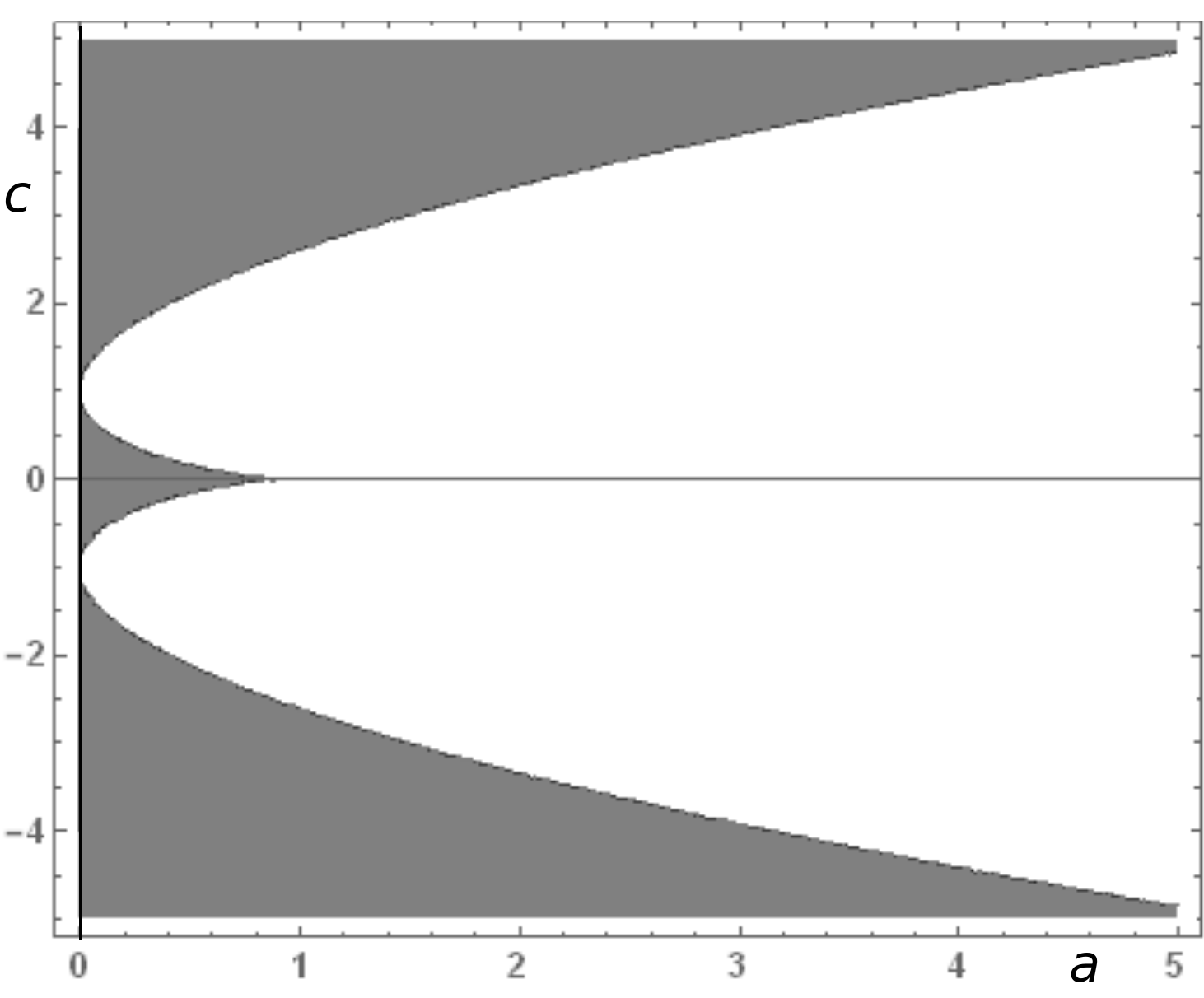}
 \caption{Behavior of the linearization about the non-polarized steady state $W=0$ depending on the parameter $a\,e^{-\beta^2 U_1^2}$ (abbreviated as $a$ in the figure) and the wave speed $c$. In the dark region, all eigenvalues are real, in the light region, there is a pair of complex conjugate eigenvalues.}
 \label{fig:cstar_full}
\end{SCfigure}

The characteristic equation \eqref{eq:char_eqn} at $W=0$ may be explicitely solved for the scaled eigenvalues $a\lambda$ but the expressions are rather long and hard to analyze. However, we can check for the eigenvalues being real by plotting the region where the sum $\vert\mathrm{Im}\lambda_{1}\vert+\vert\mathrm{Im}\lambda_{2}\vert+\vert\mathrm{Im}\lambda_{3}\vert$ is zero. The result is shown in Figure \ref{fig:cstar_full}, and we observe that for any given $c\neq \pm 1$ there is an $a_0(\vert c\vert)>0$ (depending also on the $U$-value of the point where we linearize) such that for any $a<a_0(\vert c\vert)$ the eigenvalues are real. 

From the singular perturbation argument we already knew that starting from the $c$-axis, except from $c=\pm 1$, and moving to the right in the diagram the behavior of the equilibria remains the same as for the purely hyperbolic limit system as long as $a$ is sufficiently small. Figuratively, only the hyperbolic directions of the fast flow are adjoined to the slow subsystem . Figure \ref{fig:bifurc} now illustrates what it means for $a$ to be small for given $c$, and it should not be surprising that the range of admissible $a$ increases as $c$ moves away from $\pm 1$.

\emph{Remark.} The dependence on $U_1$ arises from the crowding term in the alignment function $f_0$. If we neglect crowding by assuming a dilute regime ($U\ll\frac{1}{\beta}$), this term may be neglected and we only deal with the parameters $c$ and $a$. In any case, the dependence of $a_0$ on $U$ does not at all hurt the validity of the result since the exponential term is bounded by $1$.

\begin{figure}[hbt]
 \includegraphics[height=54mm,width=0.57\linewidth]{./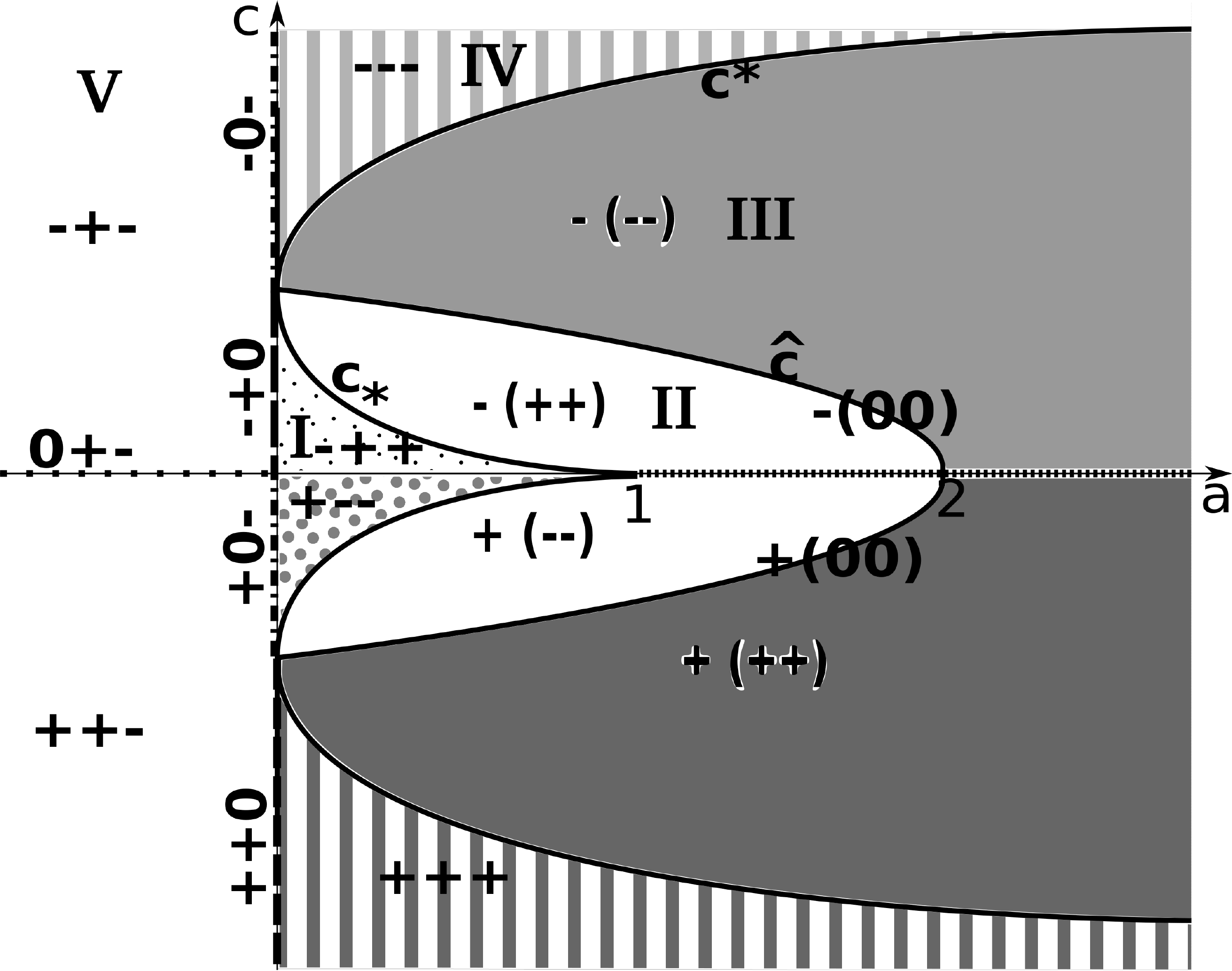}
 \caption{Bifurcation diagram for the ODE system \eqref{eq:TWfull_system_reduced}, with the type of equilibria sketched versus $a\,e^{-\beta^2U_1^2}$ and wave speed $c$. Indicated in each region are the signs of real parts of the eigenvalues of the linearization about the equilibria. Pairs of signs in parentheses indicate pairs of complex conjugate eigenvalues. In regions $\mathrm{II}$ and $\mathrm{III}$, oscillations and therefore no monotone wave fronts are to be expected. Clearly, we are only interested in the case $a\,e^{-\beta^2U_1^2}>0$
}
 \label{fig:bifurc}
\end{figure}

As everything is symmetric with respect to reflection at $x=0$ (which simply means interchanging left and right) we turn our attention to the upper region in Figure \ref{fig:bifurc} where the wave speed is positive. Let us now fix $a$ and $U_1$ and discuss the behavior depending on $c$. We find two critical velocities $c^*>1$ and $c_* < 1$, both depending on the parameter $a \,e^{-\beta^2U_1^2}$. For wave speeds $c\in(c_*(a), c^*(a))$ we cannot expect monotone wave fronts with the non-polarized equilibrium as asymptotic state since in that parameter region, trajectories spiral into this equilibrium (or out of it). However, for sufficiently (depending on $a$) large velocities $c$ we have a node which may be connected to any of the outer saddle points by a heteroclinic orbit corresponding to a monotone polarization wave.

Observe moreover, that for $a=2(1-c^2)>0$ (cf. the curve $\hat c(a)$ in Figure \ref{fig:bifurc}), there is a pair of purely imaginary eigenvalues $\pm (1-c^2)\imath = \pm \frac{a\imath}{2}$. This switch extends to the perturbed system ($a>0$) the stability change of the critical manifold at $c=1$.

Summarizing the results of this section, we conclude with the following prediction which types of polarization waves we might possibly expect to find in simulations of the full problem \eqref{eq:full}.

\noindent \textit{Prediction}. Given the constant of integration $C_1 = c\,U_1$ we find the following possible combinations of asymptotic states and expected wave fronts depending on the wave speed $c$:
\begin{enumerate}
  \item[$(i)$]{fast wave fronts with $c\geq c^* >1$ connecting either of the states 
  $$U_3\!=\!W_3\!=\!U_{1}c/(c-1) ~\mbox{ or }~ U_2\!=\!-W_2\!=\!U_{1}c/(c+1) ~~~\mbox{ to }~(U\!=\!U_{1}, W\!=\!0)$$ 
  if $c\geq c^{*}(a)$,}
  \item[$(ii)$]{slow wave fronts with $0<c\leq c_*<1$ connecting the state $$(U\!=\!U_{1}, W\!=\!0) ~~\mbox{ to  }~~ U_2\!=\!-W_2\!=\!U_{1}c/(c+1)$$ if and only if $c \leq c_{*}(a)$,}
 \end{enumerate}
and the mirror images of these fronts for negative velocities. 

We should remark that we give no proof of the stability of any of these waves as solutions of the PDE system \eqref{eq:full}. However,  we will see in section \ref{sec:simulations} that only the fast polarization waves are observed in simulations which leads to the conjecture that the slow depolarization waves are unstable.

\section{Inversion waves} \label{sec:inversion}

Having found that given the parameter $a$, monotone (de)- po\-lar\-iz\-at\-ion waves can only exist for a limited range of wave velocities, we may ask whether there might be other wave types with velocity $c_* < \vert c\vert < c^*$. Natural candidates are \emph{polarization inverting waves}, or \emph{inversion waves} for short, connecting two fully aligned states $W\!=\!U$ and $W\!=\!-U$ which amount to wave fronts running through the fully polarized medium and inverting its polarization from fully right aligned to fully left aligned or vice versa.

The existence of inversion waves connecting two totally aligned states could not be predicted by the perturbation theory employed in the previous section. 
However, we may illustrate their emergence by sketching the trajectories of the three dimensional system \eqref{eq:TWfull_system_reduced} for different wave velocities. For this sake we solve this system of ordinary differential equations for suitable initial conditions close to the saddle point $(U_3,W_3\!=\!U_3)$ and observe its evolution along the saddle's unstable manifold which for the leading fronts runs into the central node at $(U_1,0)$, where $U_1$ and $U_3$ satisfy relation $(c-1)U_3\! = \!cU_1$.

\begin{figure}[hbt]
 \includegraphics[width=0.49\linewidth, keepaspectratio]{./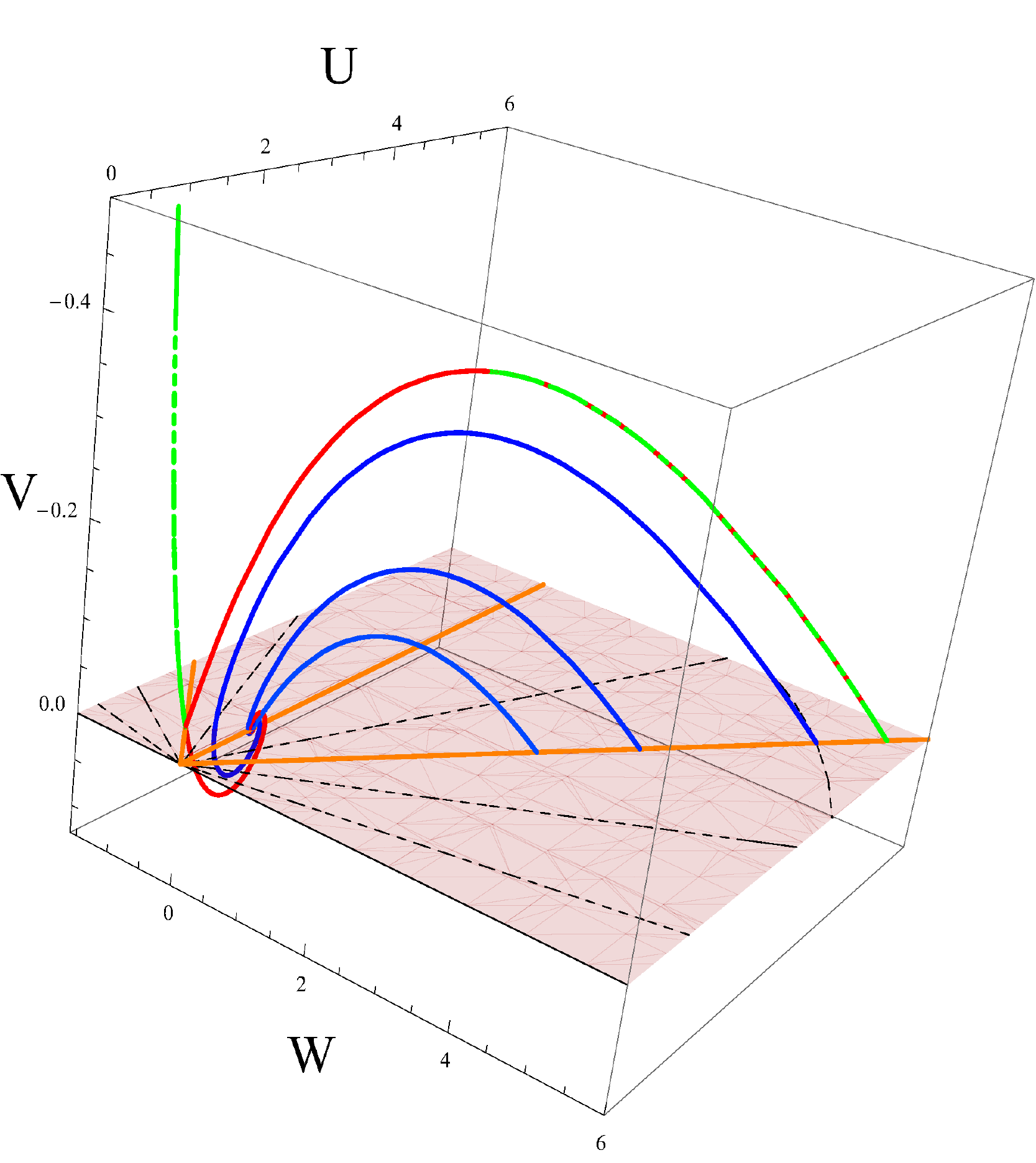}\hskip1mm %
  \includegraphics[width=0.49\linewidth, height=0.35\linewidth]{./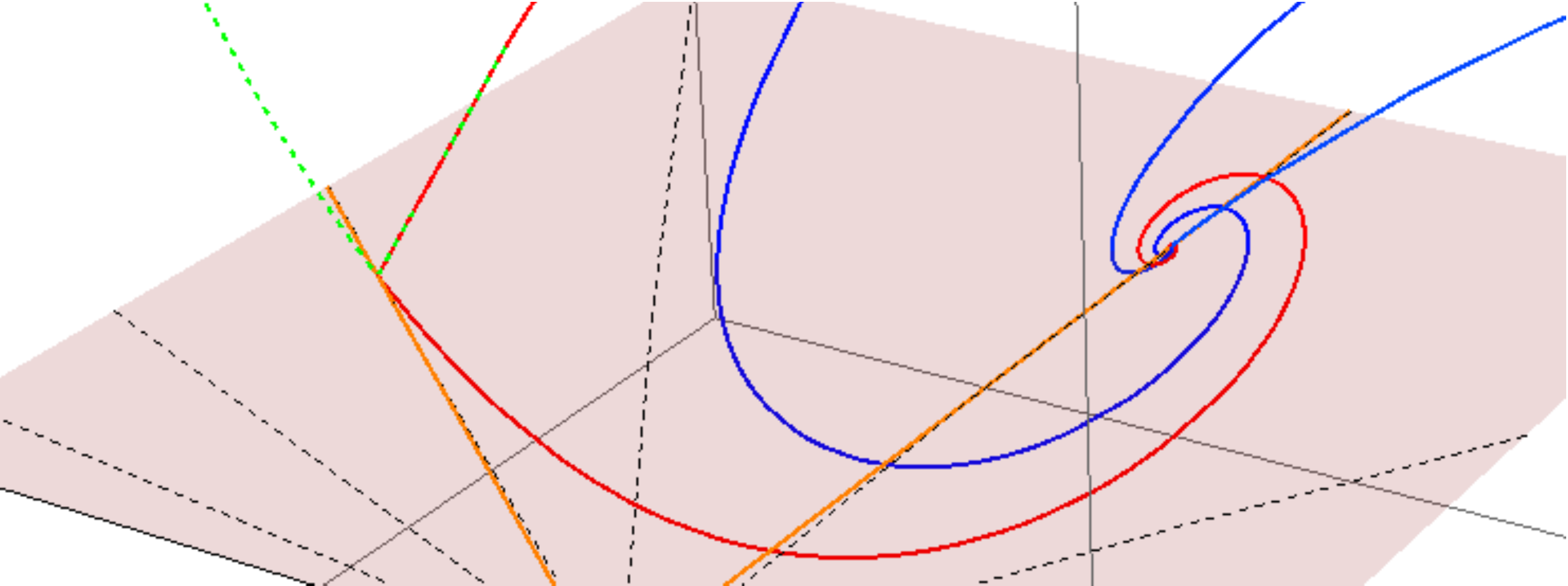}
  
\caption{\textit{Left}: Heteroclinic orbits starting at the state $(U_3,U_3)$ (on the right orange ray, selected according to the chosen speed $c$) entering into the central steady state $(U_1\!=\!1,0)$ or vanishing off to infinity (green trajectory). The heteroclinic orbit corresponding to an inversion wave lies between the hardly distinguishable green and red trajectory and cannot be shown as it is not generic. The velocities $c$ are (in ascending order of the starting point value $U_3$): $c=c^*\approx1.6,$\newline$c = 1.3,~c=1.25$, $c=1.2162466$ \newline$c=1.2162465$ (slightly smaller than speed of the inversion wave).\newline
\textit{Right}: zoom to central and left equilibrium. Notice the sharp distinction between the red and the green trajectory.}
\label{fig:trajectories}
\end{figure}

In Figure \ref{fig:trajectories} we see trajectories corresponding to monotone wave fronts for $c\geq c^*(a)$ as predicted from the linearization. Decreasing the velocity, the orbits start spiraling into the central steady state $(U_1,0)$ although this spiraling is barely visible for $c$ only slightly smaller than $c^*$. 

Decreasing $c$ further towards $1$ leads to increasing oscillations until the trajectories become so large as to hit the opposite equilibrium $(U_2,W_2\!=\!-U_2)$ and do not return towards the central equilibrium. Now the (one dimensional) unstable manifold of the first saddle point lies in  the (two dimensional) stable manifold of the second one and we obtain a heteroclinic orbit connecting these two saddle points. That happens for precisely one wave speed $c>1$ which we cannot determine analytically but observe as the speed of the inversion waves in the simulations. 

A further decrease of the wave speed makes the unstable manifold of the first saddle point lie on the other side of the stable manifold of the second one, and the trajectory now approaches this second point $(U_2,W_2\!=\!-U_2)$ but then quickly vanishes off to infinity, which obviously does not make sense as a solution of the system of partial differential equations. That means that we should not expect wave front solutions for these smaller velocities.

\section{Wave patterns emerging in simulations}
\label{sec:simulations}

So far we have discussed the possible monotone wave patterns without knowing which of them are stable and can be expected to be observed as asymptotic solutions of problem \eqref{eq:full} for generic initial conditions. We therefore try to find traveling wave solutions in simulations of the full model \eqref{eq:full} on a large domain. 

\subsection{Types of wave observed in simulations}

Let us consider two types of initial data which both turn out to give rise to solutions consisting of different wave fronts traveling at specific speeds. Typical examples for the data and resulting wave patterns are shown in Figure \ref{fig:wave_shapes}. The initial data are constructed by locally removing or adding some particles compared to the homogeneous non-polarized state $u\equiv u_0$, $w\equiv0$. It should be noted that the spatial scale for the initial data is strongly zoomed, meaning that the total initial deviation from the homogeneous state is small. Still, the pronounced wave patterns seen in the right panel of \ref{fig:wave_shapes} emerge from these localized inhomogeneities. Since in the resulting patterns the leading wave fronts are faster than the trailing ones, the peaks get wider as time proceeds. 

\begin{figure}[hbt]
 \includegraphics[width=0.99\linewidth,height=62mm]{./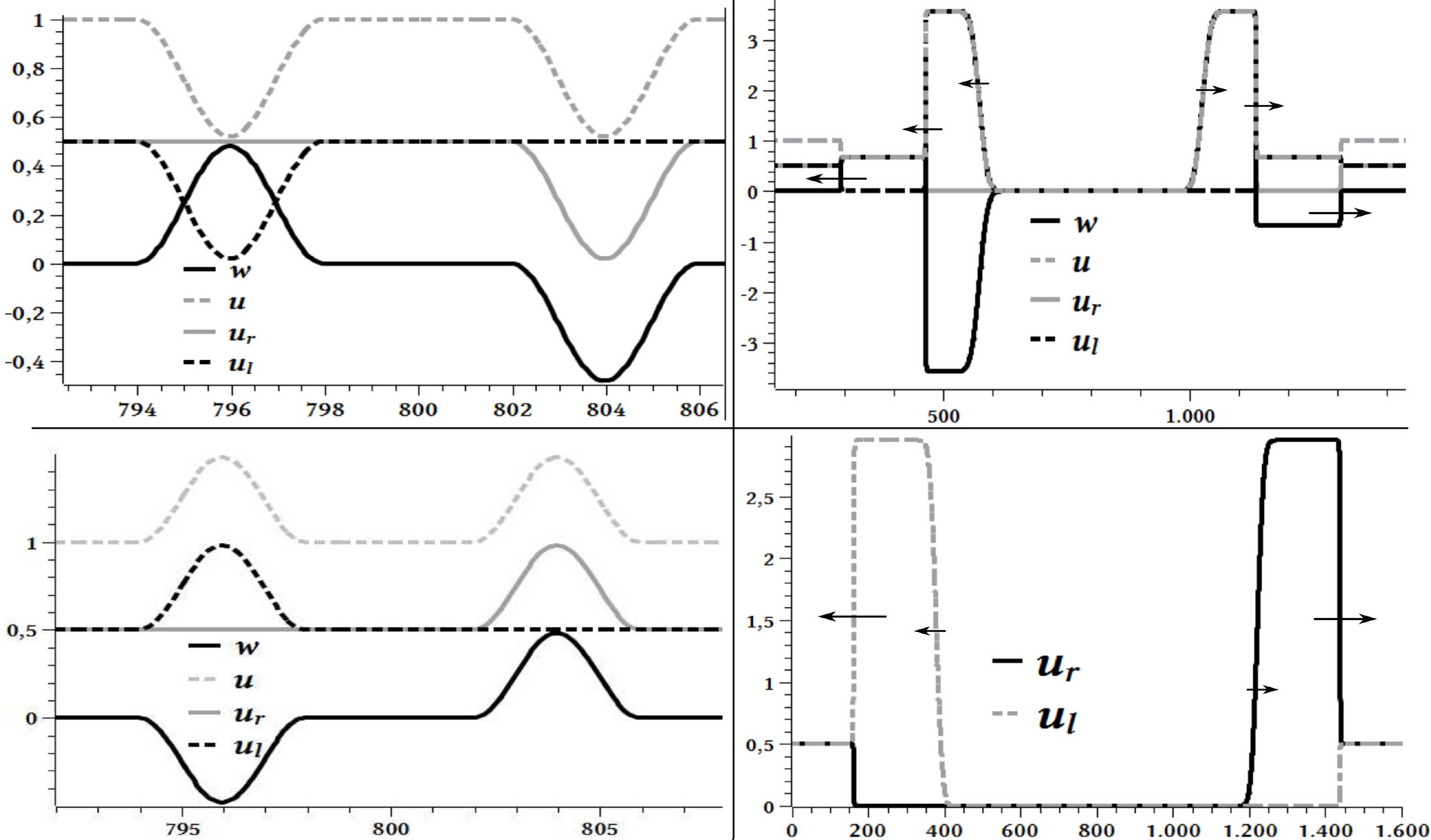}
 \caption{Typical patterns (\textit{right}) emerging from symmetric initial conditions (\textit{left}). Removing some particles of either orientation from the non-polarized state (\textit{top}) results in solutions with two wave fronts followed by one diffusion front for each direction. Adding some particles (\textit{bottom}) results in a single wave front followed by a diffusion front in each direction. The arrows designate the velocity of the respective fronts.}
\label{fig:wave_shapes}
\end{figure}

The next observation to report is that we indeed find some of the wave fronts predicted in the previous sections. Most prominently, depending on the initial conditions we find different types of fast polarization waves with velocities $\vert c\vert >1$, namely those connecting any of the two fully polarized states $W=\pm U$ to the non-polarized state $W=0$. Moreover, the precise relation between the plateau values $U_i$ at the asymptotic states and the wave speeds $c$ are as predicted at the end of subsection \ref{ss:linearization}. We recall that this relation can also be inferred from mass conservation for the particles.

\begin{figure}[hbt]
\includegraphics[width=65mm, height=68mm]{./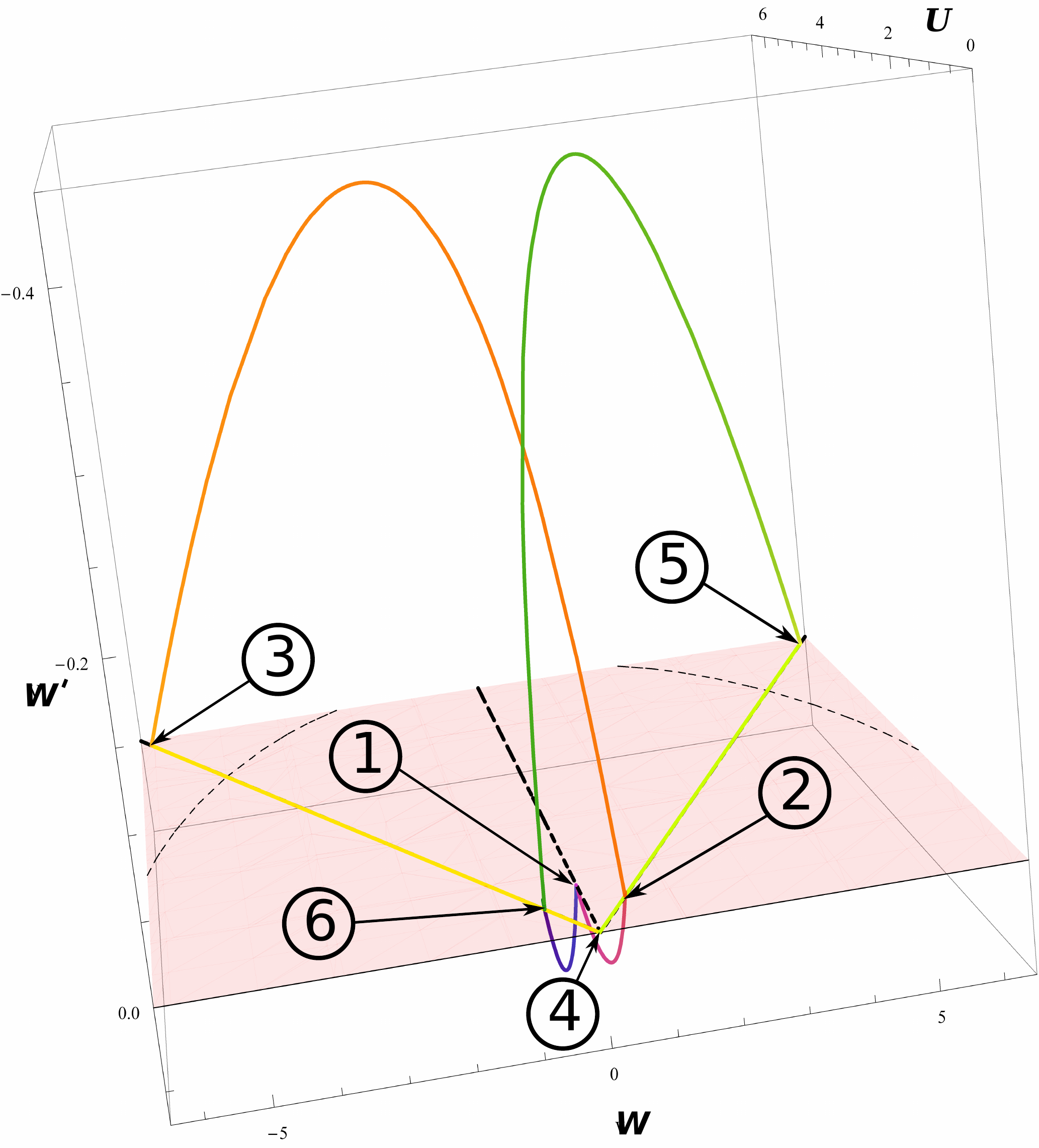}
\caption{Illustration of the $U$-$W$-$W'$ phase space trajectories corresponding to a wave pattern as emerging in the top panel of Figure \ref{fig:wave_shapes}. Corresponding to the plateau values in Figure \ref{fig:wave_shapes} from left to right are the states $1$ (asymptotic state $U=W=1$), $2$ (small hump, only right moving particles), $3$ (large hump, fully polarized to the left), $4$ (central region devoid of particles), $5$ (large hump, only right moving particles), $6$ (small hump, fully left polarized), and $7$ (asymptotic non-polarized state again). The diffusion profiles between the states $3,4,$ and $5$ are shown as yellow straight lines on the rays $U = \pm W, W'=0$ (projected into the $W'=0$ plane). The purple and blue trajectories correspond to polarization waves whereas inversion waves are depicted in orange and green.}
 \label{fig:wave_pattern_phase}
\end{figure}

In addition, we also observe inversion waves connecting the two totally aligned states to one another. These are the second edges of the wave patterns in the two-front solutions in the top panel of Figure \ref{fig:wave_shapes}.

Moreover, we observe additional patterns which we shall call \emph{diffusion fronts}. 
They connect the trivial state $W\!=\!U\!=\!0$ emerging in the center of the domain with one of the totally aligned states. It shall be noted that they are just solutions of the pure diffusion-advection equation 
$$\partial_t u_{r/l} \pm \partial_{x}u_{r/l} \!=\! a \partial_{xx}u_{r/l}$$ 
since the alignment term vanishes identically along these profiles. In the $U$-$W$ plane, these diffusion profiles simply correspond to orbits along the lines $W=\pm U$.

The orbits in the phase space of \eqref{eq:TWfull_system_reduced} corresponding to the wave pattern in the top panel of Figure \ref{fig:wave_shapes} are sketched in Figure \ref{fig:wave_pattern_phase}. Note that the diffusion profiles at the rear of the outward moving wave trains in Figure \ref{fig:wave_shapes} are represented by the straight lines connecting the outermost points with the origin in Figure \ref{fig:wave_pattern_phase} while the entire depletion zone in the center is captured by the origin.

\subsection{Observed wave velocities depending on the parameters}

Concerning the observed velocities we note that we find as polarization waves precisely those with critical velocity $c^{*}$ as determined by the linearization, to be seen in Figure \ref{fig:wavespeeds}. This behavior is known from simple reaction-diffusion equations as the Fisher-KPP equation where precisely those wave fronts with critical velocity are shown to be stable (cf. \cite{VolPet2009}). In particular, the velocity of these waves indeed only depends on the product $a=\alpha\varepsilon$ rather than on both parameters individually. 

The slower inversion waves arising in the two-front patterns have a velocity $\tilde c\in (1,c^{*})$. In that case, the relation between wave speed and value of the asymptotic states $U_{2}=W_{2}$ and $U_{3}=-W_{3}$ reads
\begin{equation}
 U_{3} = \frac{\tilde c-1}{\tilde c+1} U_{2}.
\end{equation}
As with $c^*$, the velocity $\tilde c$ only depends on the product $a$ and can be accurately predicted by simulations of system \eqref{eq:TWfull} of ordinary differential equations. We obtain $\tilde c$ as that velocity at which the trajectories in \ref{fig:trajectories} switch from spiraling into the central focus to vanishing of towards infinity. The values shown in Figure \ref{fig:wavespeeds} are obtained by this method which yields more accurate values than the simulation of the PDE system and subsequent measurement of the velocity as displacement of the front divided by the time elapsed.

\begin{figure}[hbt]
 \includegraphics[width=0.45\linewidth, height=36mm]{./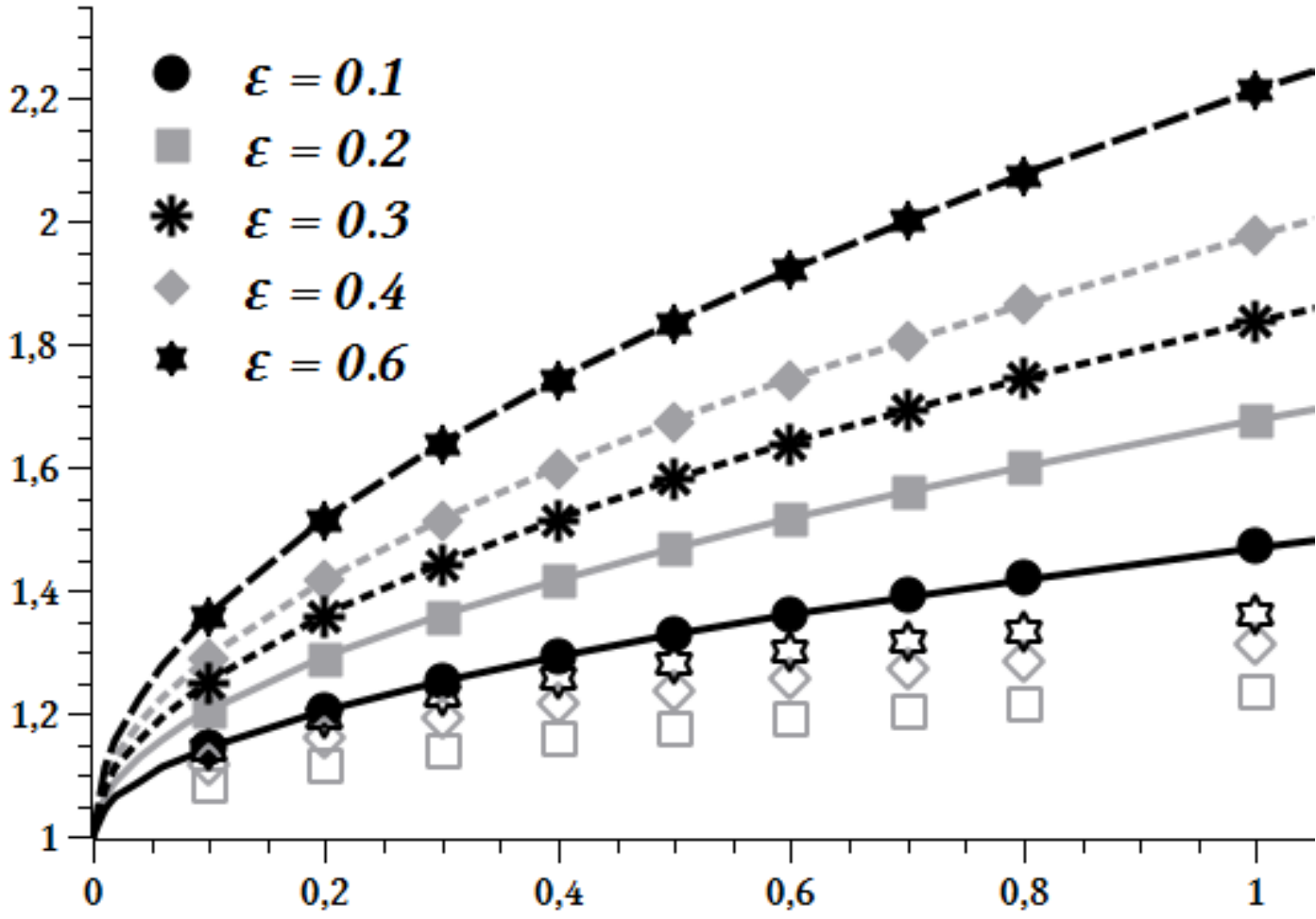} \hskip0.01\linewidth %
 \includegraphics[width=0.53\linewidth, height=36mm]{./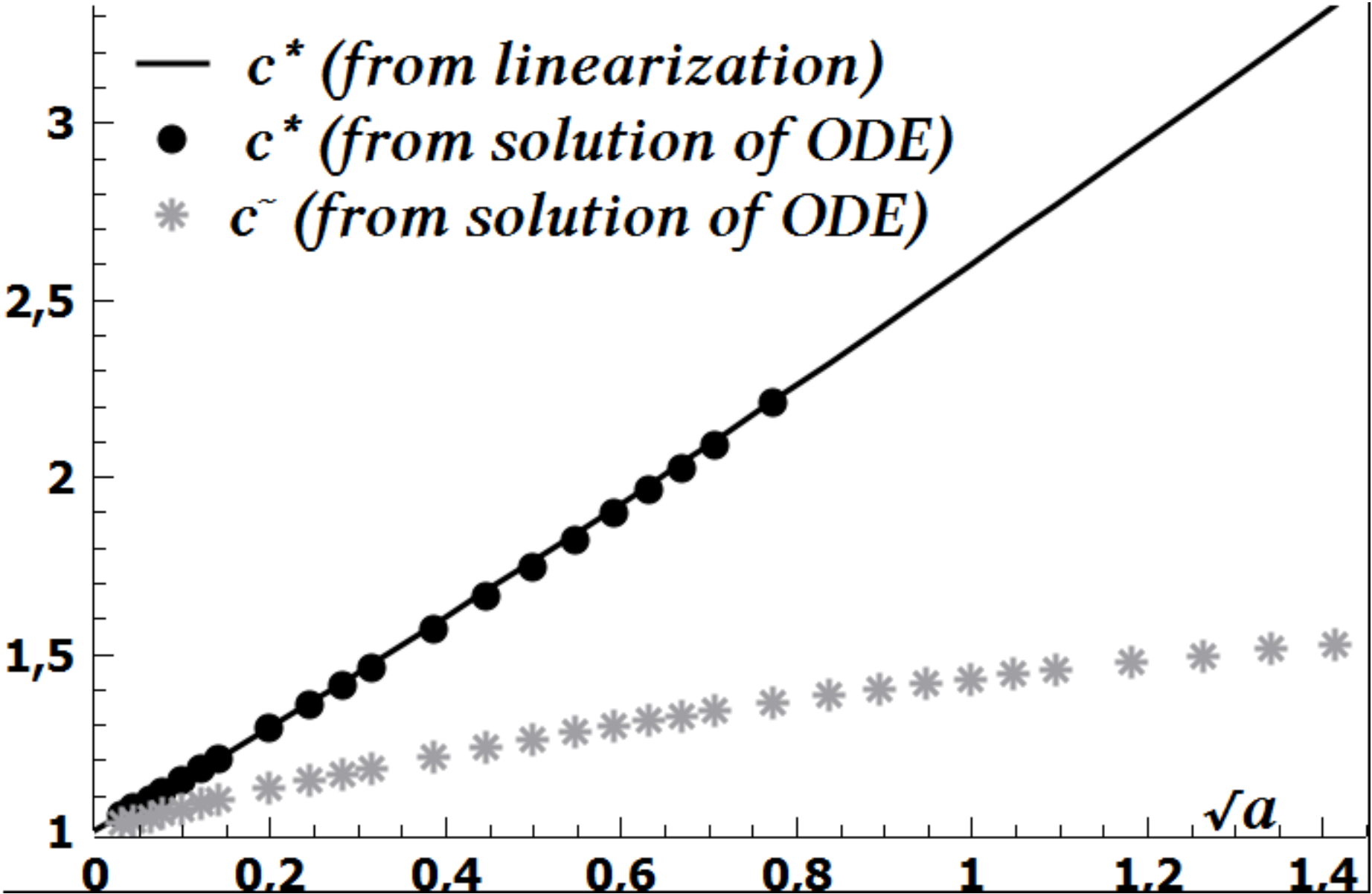}  
 \caption{Observed wave speeds in simulations at the leading polarization (\textit{full symbols}) and trailing inversion fronts (\textit{hollow symbols}) for two-front solutions depending on $\alpha$ for different values of $\varepsilon$ (\textit{left}) without exponential crowding ($\beta=0$). Note the perfect agreement of the measured velocities with $c^{*}$ (\textit{lines}) predicted from the linearization. \textit{Right}: Values of $c^*$ obtained from the linearization (\textit{solid line}) and from the simulation of system \eqref{eq:TWfull}(\textit{circles}) and velocities of the inversion waves ($\tilde{c}$, \textit{stars}) computed from simulations of system \eqref{eq:TWfull} plotted against $\sqrt{\alpha\varepsilon}$.}
 \label{fig:wavespeeds}
\end{figure}

For the wave speeds of the single-front solutions sketched in Figure \ref{fig:wave_shapes} we observe precisely the same values as for the leading fronts of two-front patterns. This fits perfectly well with the theory since the linearization leading to the prediction in subsection \ref{ss:linearization} does not distinguish between these patterns. 

Finally, the velocities for the trailing fronts always lie between the particle velocity $1$ and the critical velocity $c^{*}$. They are also monotonically increasing functions of the product $\alpha\varepsilon$ 
 but do not satisfy any obvious relationship with $c^{*}$.

\section{Conclusion and discussion}\label{sec:conclusion}

Considering a model for the directed flow of particles undergoing diffusion and mutual alignment that was motivated by a model for actin filaments in the cytoskeleton of motile cells we found several types of traveling wave solutions. Starting from the easily accessible hyperbolic limit system without diffusion we analytically showed the existence of wave fronts moving at any non-zero velocity different from the particle velocity if the diffusion coefficient is sufficiently small. 

Numerical simulations showed that the wave fronts moving at some critical velocity $c^{*}$ determined from the linearization of the system do really emerge from rather arbitrary initial data and thus appear to be stable. Moreover, we found another type of wave fronts traveling at smaller velocities which were neither predicted by the linearization of the equations nor by the singular perturbation theory. However, a closer examination of the system of ordinary differential equations describing the wave shapes showed how these fronts emerge as trajectories connecting two saddle points.  

Of particular interest are those wave patterns emerging from initial perturbations of the homogeneous non-polarized state with some additional particles added for either direction. The resulting solution was comprised of two humps of particles moving outward from the center at the critical velocity specified above, growing in width so that their rear flank moves at the particle velocity $1$, and leaving behind a central region devoid of any particles. This emergence of multiple wave fronts of different velocities has been known for a long time (e.g., \cite{ShkMatVol1993}) but it is remarkable that complicated patterns of this type are observed in a rather minimal system of two equations.

We now return to the motivation for the proposed model -- the investigation of a system of equations describing the movement of polymerizing and depolymerizing actin filaments. The traveling wave solutions we found may be interpreted as polymerization fronts as observed in, e.g., \cite{Vic2000}. As traveling waves describing a persistently moving cell are among the most interesting solutions regarding cell motility (e.g., \cite{RecPutTru2013, BarLeeAllTheMog15}), this strongly suggests further investigation of possible alignment mechansims to be incorporated in the original cytoskeleton model in \cite{FuhKasSte2007}. The shock-like solutions observed for this model in \cite{FuhSte2015} were not easily understood in the context of the hyperbolic-parabolic system. However, in terms of a more comprehensive model it may be expected to find them explained as reminiscent of the traveling wave solutions found here, if we recall that the filaments in the original model were not assumed to be short and therefore, their diffusion was neglected.

\section*{Acknowledgments}
The work of J. Fuhrmann was partially supported by the German Ministry of Education and Research (grants 01GQ1003A, 01GS08154 \cite{NGFN2010}).

\bibliographystyle{plain}
\bibliography{references}

\begin{thebibliography}{10}

\bibitem{BarLeeAllTheMog15}
E.~Barnhart, K.~Lee, G.~M. Allen, J.~A. Theriot, and A.~Mogilner.
\newblock Balance between cell-substrate adhesion and myosin contraction
  determines the frequency of motility initiation in fish keratocytes.
\newblock {\em Proc Natl Acad Sci U.S.A.}, 16(113):5045--5050, 2015.

\bibitem{Fen1979}
N.~Fenichel.
\newblock Geometric singular perturbation theory.
\newblock {\em J. Diff. Eq.}, (31):53--98, 1979.

\bibitem{FreFuhSte2014}
H.~Freist\"uhler, J.~Fuhrmann, and A.~Stevens.
\newblock Traveling waves emerging in a diffusive moving filament system.
\newblock In G.~Ajmone-Marsan and M.~Delitala, editors, {\em Managing
  complexity, reducing perplexity. Modeling biological systems}, 2014.

\bibitem{FuhKasSte2007}
J.~Fuhrmann, J.~A. K\"as, and A.~Stevens.
\newblock Initiation of cytoskeletal asymmetry for cell polarization and
  movement.
\newblock {\em J. Theor. Biol.}, (249):278--288, 2007.

\bibitem{FuhSte2015}
J.~Fuhrmann and A.~Stevens.
\newblock A free boundary problem for cell motion.
\newblock {\em Differential Integral Equations}, 28(7):695--732, 2015.

\bibitem{Kue2015}
C.~Kuehn.
\newblock {\em Multiple Time Scale Dynamics}, volume 191 of {\em Applied
  Mathematical Sciences}.
\newblock Springer, 2015.

\bibitem{LutSte2002}
F.~Lutscher and A.~Stevens.
\newblock Emerging patterns in a hyperbolic model for locally interacting cell
  systems.
\newblock {\em Journal of Nonlinear Science}, 12:619--640, 2002.

\bibitem{Man2010}
M.~B.~A. Mansour.
\newblock A numerical study of traveling wave fronts for a
  reaction-diffusion-advection model.
\newblock {\em Acta Appl Math}, 109:939--947, 2010.

\bibitem{RecPutTru2013}
P.~Recho, T.~Putelat, and L.~Truskinovsky.
\newblock Contraction driven cell motility.
\newblock {\em Phys. Rev. Lett.}, (111):108102, 2013.

\bibitem{SanPer2016}
F.~S\'{a}nchez-Garduno and J.~P\'{e}rez-Vel\'{a}zquez.
\newblock Reactive-diffusive-advective traveling waves in a family of
  degenerate nonlinear equations.
\newblock {\em The Scientific World Journal}, 2016:ID 5620839, 2016.

\bibitem{ShkMatVol1993}
K.~G. Shkadinsky, G.~V. Shkadinskaya, B.~J. Matkowsky, and V.~A. Volpert.
\newblock Two-front traveling waves in filtration combustion.
\newblock {\em SIAM J Appl Math}, 1(53):128--140, 1993.

\bibitem{Smo1994}
J.~Smoller.
\newblock {\em Shock Waves and Reaction-Diffusion Equations}, volume 258 of
  {\em Grundlehren der mathematischen Wissenschaften}.
\newblock Springer, New York, NY, 2nd edition, 1994.

\bibitem{NGFN2010}
R.~Spanagel, D.~Bartsch, B.~Brors, N.~Dahmen, J.~Deussing, R.~Eils, G.~Ende,
  J.~Gallinat, P.~Gebicke-Haerter, A.~Heinz, F.~Kiefer, W.~J\"ager, K.~Mann,
  F.~Matth\"aus, M.~N\"othen, M.~Rietschel, A.~Sartorius, G.~Sch\"utz, W.~H.
  Sommer, R.~Sprengel, H.~Walter, E.~Wichmann, T.~Wienker, W.~Wurst, and
  A.~Zimmer.
\newblock An integrated genome research network for studying the genetics of
  alcohol addiction.
\newblock {\em Addict Biol}, (15):369--379, 2010.

\bibitem{Szm1991}
P.~Szmolyan.
\newblock Transversal heteroclinic and homoclinic orbits in singular
  perturbation problems.
\newblock {\em J. Diff. Eq.}, 2(92).

\bibitem{TysKee1988}
J.~J. Tyson and J.~P. Keener.
\newblock Singular perturbation theory of traveling waves in excitable media (a
  review).
\newblock {\em Physica D}, (32):327--361, 1988.

\bibitem{Vic2000}
M.~G. Vicker.
\newblock Reaction-diffusion waves of actin filament
  polymerization/depolymerization in dictyostelium pseudopodium extension and
  cell locomotion.
\newblock {\em Biophys. Chem.}, (84):87--98, 2000.

\bibitem{VolPet2009}
V.~Volpert and S.~Petrovskii.
\newblock Reaction-diffusion waves in biology.
\newblock {\em Physics of Live Reviews}, (6):267--310, 2009.

\bibitem{ZhaJin2017}
T.~Zhang and Y.~Jin.
\newblock Traveling waves for a reaction-diffusion-advection predator-prey
  model.
\newblock {\em Nonlinear Analysis: Real World Applications}, 36:203--232, 2017.

\end{thebibliography}
\end{document}